\documentclass[16pt]{article}
\usepackage{graphicx,amssymb,amsmath,amsthm}
\usepackage{comment,cite,color}
\usepackage{cite,color}
\usepackage{algorithmic}
\usepackage{mathrsfs}
\usepackage[ruled]{algorithm}
\usepackage{epsfig}

\newtheorem{thm}{Theorem}[section]

\newtheorem{lem}[thm]{Lemma}

\theoremstyle{definition}
\newtheorem{defn}[thm]{Definition}
\newtheorem{example}[thm]{Example}
\newtheorem{remark}[thm]{Remark}

\newcommand{\R}{\mathbb{R}}
\newcommand{\C}{\mathbb{C}}
\newcommand{\Z}{\mathbb{Z}}
\newcommand{\N}{\mathbb{N}}

\date{}

\begin{document}
\bibliographystyle{plain}
\title{ Deterministic Sampling of Sparse Trigonometric Polynomials \thanks{
 Project supported by the Funds for Creative Research Groups of China (Grant No. 11021101)
and by NSFC grant 10871196 and National Basic Research Program of China (973 Program
2010CB832702) }}

\author{ \ Zhiqiang Xu}

\maketitle

\begin{abstract}
One can recover sparse multivariate trigonometric polynomials from
few randomly taken samples with high probability (as shown by Kunis and Rauhut). We give a deterministic sampling
of multivariate trigonometric polynomials inspired by Weil's exponential sum.  Our sampling can produce a deterministic matrix satisfying the statistical
 restricted isometry property, and also nearly optimal Grassmannian frames.
We show that  one can exactly reconstruct every $M$-sparse multivariate
trigonometric polynomial with fixed degree and of length $D$ from the
determinant sampling $X$, using the orthogonal matching pursuit, and $|X|$ is a
prime number greater  than $(M\log D)^2$. This result is optimal within the $(\log D)^2 $ factor. The simulations show that the
 deterministic sampling can offer reconstruction  performance similar to the random sampling.
\end{abstract}

\section{Introduction}

We investigate the problem of reconstructing sparse multivariate
trigonometric polynomials from few samples on $[0, 1]^{d}$. Let
$\Pi_q^d$ denote the space of all trigonometric polynomials of
maximal order $q\in \N$ in dimension $d$. An element  $f$ of
$\Pi_q^d$ is of the form
$$
f(x)\,\,=\!\!\sum_{k\in [-q,q]^d\cap \Z^d} c_k e^{2\pi ik\cdot x}, \quad x\in
[0,1]^d,
$$
where $c_k\in \C$. The dimension of $\Pi_q^d$ will be denoted by
$D:=(2q+1)^d$.
 We denote the support of the sequence of coefficients
of $c_k$ by $T$, i.e.,
$$
T:=\{k:c_k\neq 0\}.
$$
Throughout this paper, we will mainly deal with ``sparse"
trigonometric polynomials, i.e., we assume that  $|T|$ is much
smaller than the dimension of $D$ of $\Pi_q^d$. We set
$$
\Pi_q^d(M) := \bigcup_{T\subset [-q,q]^d\cap \Z^d\atop |T|\leq M} \Pi_T,
$$
where, $\Pi_T$ denotes the space of all trigonometric polynomials
whose coefficients are supported on $T$. Note that the set $\Pi_q^d(M)$  is the union of linear spaces and  consists of all trigonometric polynomials whose Fourier coefficients are supported
on a set $T\subset [-q,q]^d\cap \Z^d$ satisfying $|T|\leq M$.
 The aim of the paper is to sample
a trigonometric polynomial $f\in \Pi_q^d(M)$ at $N$ points and try to
reconstruct $f$ from these samples. We denote the sampling set
$$
X:=\{x_1,\ldots,x_N\}.
$$
We would like to reconstruct $f\in \Pi_q^d(M)$ from its sample
values
$$
y=f(x), \quad x\in X.
$$
 We use a  decoder $\Delta$ that maps
from $\C^N$ to $\Pi_q^d$, and the role of $\Delta$ is to provide an
approximation  to  $f$.
 The previous work concerns on the randomly taken samples. In \cite{tao1,tao2}, the authors choose $X$ by takeing  samples randomly on a lattice  and
use the {\em Basis Pursuit} (BP) as the decoder $\Delta$. In
\cite{trig1}, the result is generalized  for the case of
$x_1,\ldots,x_N$ being the uniform distribution on
$[0,1]^d$. We state the result as follows:
\begin{thm}\cite{trig1} Assume $f\in \Pi_q^d(M)$ for some
sparsity $M\in \N$. Let $x_1,\ldots,x_N\in [0,1]^d$ be
independent random variables having the uniform distribution on
$[0,1]^d$. If for some $\epsilon>0$ it holds
$$
N\geq CM\log(D/\epsilon)
$$
then with probability at least $1-\epsilon$ the trigonometric
polynomial $f$ can be recovered from its sample values $f(x_j)$,
$j=1,\ldots,N$, by Basis Pursuit. The constant $C$ is absolute.
\end{thm}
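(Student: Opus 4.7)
The plan is to use the standard compressed-sensing dual-certificate framework pioneered by Cand\`es--Romberg--Tao for random partial Fourier matrices. Form the $N\times D$ sampling matrix $A$ with entries $A_{j,k}=e^{2\pi i k\cdot x_j}$, so that $y=Ac$ where $c=(c_k)$ is the coefficient vector supported on a set $T$ with $|T|\le M$. Because the $x_j$ are i.i.d.\ uniform on $[0,1]^d$, orthogonality of the complex exponentials gives $\mathbb{E}[\tfrac1N A^*A]=I_D$, and each column of $A$ has unit-modulus entries. By the Fuchs/Tropp condition, exact recovery via Basis Pursuit is guaranteed if (i) $A_T$ has full column rank and (ii) there exists $\pi\in\mathrm{range}(A^*)$ with $\pi|_T=\mathrm{sign}(c|_T)$ and $\|\pi|_{T^c}\|_\infty<1$.

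For (i), I would apply a concentration inequality for bounded orthonormal systems---Rudelson's selector lemma, the noncommutative Khintchine inequality, or matrix Bernstein---to conclude that $\|\tfrac1N A_T^*A_T-I_T\|\le\tfrac12$ with probability at least $1-\epsilon/2$, provided $N\gtrsim M\log(M/\epsilon)$. On that event, introduce the canonical dual certificate
\[
\pi\;=\;\tfrac1N A^*A_T\bigl(\tfrac1N A_T^*A_T\bigr)^{-1}\mathrm{sign}(c|_T),
\]
which by construction satisfies $\pi|_T=\mathrm{sign}(c|_T)$. For each $k\notin T$, expand the inverse as a Neumann series about $I_T$ and bound $|\pi_k|$ term by term; the leading contribution $\tfrac1N A_k^*A_T\,\mathrm{sign}(c|_T)$ is a sum of $N$ independent bounded complex random variables amenable to Hoeffding's inequality, and higher-order terms are controlled by the operator-norm estimate from step (i). A union bound over the at most $D$ indices $k\notin T$ then gives $\|\pi|_{T^c}\|_\infty<1$ with probability at least $1-\epsilon/2$ as soon as $N\gtrsim M\log(D/\epsilon)$. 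Combining the two events through a union bound produces the stated sample complexity.

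The main obstacle is step (ii): because $\pi_k$ depends on the random matrix $A_T$ through the inverse $(\tfrac1N A_T^*A_T)^{-1}$, its entries are \emph{not} clean sums of independent random variables and plain Hoeffding does not apply directly. One remedy is to condition on the high-probability event from step (i) and control the Neumann tails against the resolvent bound $\tfrac12$; another, cleaner, route is the \emph{golfing scheme}, which partitions the $N$ samples into $\Theta(\log D)$ independent batches and iteratively constructs the certificate using fresh randomness at each step, thereby decoupling the inverse from the off-support estimate entirely. Either route should yield the bound $N\gtrsim M\log(D/\epsilon)$, with the golfing variant typically giving sharper constants and a cleaner probabilistic analysis.
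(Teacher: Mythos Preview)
The paper does not prove this theorem at all: it is quoted verbatim from Rauhut~\cite{trig1} as background for the random-sampling setting, and the present paper's own contributions (Lemma~\ref{le:co}, Theorems~\ref{th:main1} and~\ref{th:33}) concern the \emph{deterministic} sampling set~(\ref{eq:x}). So there is no ``paper's proof'' to compare your proposal against.

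That said, your sketch is a fair high-level description of how results of this type are established. Two remarks. First, the golfing scheme is anachronistic here: Rauhut's 2007 argument predates it and instead bounds the operator norm $\|\tfrac1N A_T^*A_T-I_T\|$ via Rudelson-type moment estimates for random sums of rank-one operators, then controls the off-support certificate entries directly (essentially your ``Neumann expansion conditioned on the good event'' route, carried out with care). Second, you have correctly identified the genuine technical obstacle---the dependence of $\pi_k$ on $(\tfrac1N A_T^*A_T)^{-1}$---but your proposal stops short of resolving it; in Rauhut's paper this is handled by moment bounds of all orders combined with a careful decoupling, not by a plain Hoeffding/union-bound step. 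If you were asked to supply a full proof, that coupling issue is where most of the work lies.
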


In \cite{trig2}, one chooses $\Delta$ as Orthogonal Matching Pursuit (Algorithm 1) and $X$ is
chosen according to one of two probability models:

(1) The sampling points $x_1,\ldots,x_N$ are independent random
variable having the uniform distribution on $[0,1]^d$;

(2) The sampling points $x_1,\ldots,x_N$ have the uniform
distribution on the finite set $2\pi\Z_m^d/m$ for some $m\in
\N\setminus \{1\}$.

\begin{thm}\label{th:trig2}\cite{trig2} Let $X=(x_1,\ldots,x_N)$ be chosen according to one of two probability models. Suppose that
$$
N\geq CM^2\log (D/\epsilon).
$$
 Then, with probability at least $1-\epsilon$, OMP recovers every $M$-sparse trigonometric polynomial. The constant $C$ is absolute.
\end{thm}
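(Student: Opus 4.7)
The plan is to reduce the recovery statement to a bound on the mutual coherence of the sampling matrix, and then to control that coherence via a Hoeffding-type concentration argument. Arrange the samples into the normalized matrix $\Phi\in\C^{N\times D}$ with columns
$$
\phi_k \;=\; \frac{1}{\sqrt{N}}\bigl(e^{2\pi i k\cdot x_1},\ldots,e^{2\pi i k\cdot x_N}\bigr)^\top,\qquad k\in[-q,q]^d\cap\Z^d,
$$
so that $\|\phi_k\|_2=1$ deterministically in the second model (and with high probability in the first, after a harmless renormalization). The core fact I would invoke is Tropp's classical sufficient condition for uniform OMP recovery: if the mutual coherence
$$
\mu(\Phi) \;:=\; \max_{k\neq l}|\langle \phi_k,\phi_l\rangle|
$$
satisfies $\mu(\Phi)<\tfrac{1}{2M-1}$, then OMP recovers every $M$-sparse vector exactly from $\Phi c$. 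This reduces the theorem to producing a probabilistic bound of the form $\mu(\Phi)\leq c/M$ whenever $N\geq CM^2\log(D/\epsilon)$.

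To carry this out, I would write the off-diagonal inner products as
$$
\langle\phi_k,\phi_l\rangle \;=\; \frac{1}{N}\sum_{j=1}^{N} e^{2\pi i (l-k)\cdot x_j}.
$$
The key observation is that in both probability models the summands are independent, bounded by $1$, and have mean zero: for the uniform model this is immediate from $\int_{[0,1]^d} e^{2\pi i m\cdot x}\,dx = 0$ for $m\neq 0$; for the lattice model $x_j\in \tfrac{1}{m}\Z_m^d$ one gets mean zero provided $m>2q$, since no nonzero coordinate $l_r-k_r\in[-2q,2q]$ is divisible by $m$. Applying a complex (two-dimensional real) Hoeffding inequality then yields
$$
\PP\bigl(|\langle\phi_k,\phi_l\rangle|>t\bigr) \;\leq\; 4\exp(-cNt^2)
$$
for every fixed $k\neq l$, and a union bound over the fewer than $D^2$ pairs gives $\PP(\mu(\Phi)>t)\leq 4D^2\exp(-cNt^2)$.

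Setting $t=1/(2M-1)$ and solving for the failure probability to be at most $\epsilon$ produces exactly the required bound $N\geq CM^2\log(D/\epsilon)$, and invoking Tropp's condition finishes the argument. The bulk of the technical care goes into two places, which I expect to be the main obstacles: first, dealing cleanly with the two sampling models in a unified way (the lattice case forces the mild requirement $m>2q$ for means to vanish, and in the continuous case one must additionally absorb the small deviation of $\|\phi_k\|_2$ from $1$ into the constants); and second, using the right tail bound for sums of bounded complex random variables so that the constants in the exponent are absolute and do not silently depend on $d$ or $q$. Everything else is a bookkeeping exercise on top of the coherence-to-OMP reduction.
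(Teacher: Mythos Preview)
The theorem is quoted from Kunis--Rauhut \cite{trig2} and is not proved in the present paper, so there is no ``paper's own proof'' to compare against; the paper only records the coherence criterion (Theorem~\ref{th:omp}) from the same source. Your outline is exactly the standard argument behind the cited result: reduce to the deterministic condition $(2M-1)\mu<1$, bound each off-diagonal Gram entry by complex Hoeffding, and union-bound over the $\binom{D}{2}$ pairs. Solving $4D^{2}\exp(-cN/(2M-1)^{2})\le\epsilon$ gives $N\ge C M^{2}\log(D^{2}/\epsilon)\le 2CM^{2}\log(D/\epsilon)$, as claimed.

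One small slip: you write that $\|\phi_k\|_2=1$ only ``with high probability'' in the continuous model. In fact $|e^{2\pi i k\cdot x_j}|=1$ pointwise, so $\|\phi_k\|_2=1$ deterministically in \emph{both} models; no renormalization is needed. Your observation about the lattice model is correct and necessary: for the expectations $\mathbb{E}\,e^{2\pi i (l-k)\cdot x_j}$ to vanish one needs $m>2q$ (otherwise distinct frequencies can produce identical columns and uniform recovery is impossible), so this hypothesis is implicit in the cited statement. With those two points noted, your plan is complete and matches the original approach.
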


\begin{algorithm}
\begin{algorithmic}
 \STATE {\bf Input:}  sampling matrix ${\mathcal F}_X$, sampling vector $y=(f(x_j))_{j=1}^N$, maximum allowed sparsity $M$ or residula tolerance  $\epsilon$
 \STATE {\bf
Output:} the Fourier coefficients $c$ and its support $T$.
 \STATE {\bf Initialize:} $r^0=y,c^0=0,\Lambda^0=\emptyset, \ell=0$.
  \WHILE { $\|r^\ell\|_2>\epsilon$ or $\ell<M$}
  \STATE {\bf match:} $h^\ell={\mathcal F}_X^Tr^\ell$
  \STATE{\bf identity:} $\Lambda^{\ell+1}=\Lambda^\ell\cup\{{\rm argmax}_j|h^\ell(j)|\}$
  \STATE{\bf update:}  $c^{\ell+1}={\rm argmin}_{z: {\rm supp}(z)\subset \Lambda^{\ell+1}}\|y-{\mathcal F}_Xz\|_2$
  \STATE \,\,\qquad\qquad $r^{\ell+1}=y-{\mathcal F}_Xc^{\ell+1}$
  \STATE \,\,\qquad\qquad $\ell=\ell+1$
 \ENDWHILE
\end{algorithmic}
\caption{\small{Orthogonal Matching Pursuit}}
\end{algorithm}

The aim of this paper are twofold. First, we present  a deterministic sampling $X$   and
show   that OMP can recover every $f\in \Pi_q^d(M)$ exactly using
 $X$ with $|X|$ being a prime number greater  than $C(M\log D)^2$, provided $q$
 is fixed. So, we improve Theorem \ref{th:trig2}.
  Second, we construct a StRIP matrix with large range size.
 In particular, we exploit the connections between the exponential sum and the RIP matrix.

We now discuss the organization of this paper and we summarize its
main contributions. In Section 2, we introduce the deterministic
sampling and show that the coherence of the corresponding  sampling matrix less or
equal to  $ {(d-1)}/{\sqrt{N}} $ provided $N\geq 2q+1$
 and $N$ is a prime number. As a conclusion, OMP algorithm can  recover $f\in
\Pi_q^d(M)$ exactly when $N\geq  (d-1)^2 (2M-1)^2+1$ with the additional assumption of $N$ being prime number not less than $2q+1$. One has shown that if
$N\leq CM^{3/2}$, then with high probability there exists an
$M$-sparse coefficient vector $c$ depending on the sampling set such
that OMP fails (one also believes that the bound can be improved to
$O(M^2)$ ) (see \cite{OMP}). So, if one requires exactly recover of
{\em all} sparse trigonometric polynomials from a single sampling
set $X$, within the factor $\log^2 D$, our
deterministic sampling size almost meets the optimal bound for $f\in
\Pi_q^d(M)$ where  $q$ is fixed and  $d$ is variable. In Section 3, we show
that the $N\times D$ determining sampling matrix obeys the
Statistical Restricted Isometry Property (StRIP) of order $O(N (\log
(D/N))^2/(\log D(\log N)^2))$.  Though there are many deterministic
 sensing matrixes satisfying the StRIP \cite{quad,strip1},  the restriction to
 the size of our matrixes is light comparing with them.  Moreover,
 the StRIP matrix also implies nearly optimal Grassmannian frames. In Section 4, we  show that the
 deterministic sampling can provide similar reconstruction performance
 to that of the random ones by numerical experiments.

\section{Deterministic sampling}

\subsection{Exponential sums and sampling matrixes}
We first introduce a result in number theory, which plays a center role in our determinant sampling.
\begin{thm}\cite{weil}\label{th:weil} Suppose that $p$ is a prime number.
Suppose $f(x)=m_1x+\cdots+m_dx^d$ and there is a $j$, $1\leq j\leq d$,
so that $p\nmid m_j$. Then
$$
\left|\sum_{x=1}^p e^{\frac{2\pi i f(x)}{p}}\right|\,\, \leq\,\,
(d-1)\sqrt{p}.
$$
\end{thm}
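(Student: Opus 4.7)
This is Weil's 1948 bound on complete exponential sums in one variable, a deep classical theorem whose proof I would only sketch at a high level rather than reproduce in detail. The plan is the standard reduction to the Riemann Hypothesis for curves over finite fields, which Weil originally established via Jacobians; the reduction has three steps: a geometric reformulation, a genus computation, and the RH input.

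First, I would introduce the Artin-Schreier cover $C : y^p - y = f(x)$ over $\mathbb{F}_p$ and relate the exponential sum to point counts on $C$. Using that the map $y \mapsto y^p - y$ on $\mathbb{F}_{p^n}$ surjects onto $\ker(\mathrm{Tr}_{\mathbb{F}_{p^n}/\mathbb{F}_p})$ with kernel $\mathbb{F}_p$, a direct fiber-by-fiber count gives
$$\#C(\mathbb{F}_{p^n}) \;=\; p^n \;+\; \sum_{a \in \mathbb{F}_p^*}\, \sum_{x \in \mathbb{F}_{p^n}} e^{2\pi i\, \mathrm{Tr}(a f(x))/p}.$$
The Galois group $\mathbb{F}_p^*$ of the cover acts on the $\ell$-adic $H^1$, and decomposing this representation according to nontrivial additive characters of $\mathbb{F}_p$ isolates the eigenspace corresponding to the single sum $\sum_x e^{2\pi i f(x)/p}$ that we want to bound.

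Second, I would pin down the genus of $C$ by Riemann-Hurwitz. The cover is \'etale over the affine line, with all ramification concentrated at infinity and wild. The hypothesis that some $m_j$ with $1 \le j \le d$ satisfies $p \nmid m_j$ prevents $f$ from being absorbed into an Artin-Schreier coboundary $g(x)^p - g(x)$, which would trivialize the cover; this keeps the Swan conductor at infinity equal to $d$, and Riemann-Hurwitz then yields $g(C) = (d-1)(p-1)/2$. Consequently the $2g = (d-1)(p-1)$ Frobenius eigenvalues partition into $p-1$ character-eigencomponents of $d-1$ eigenvalues each, one per nontrivial additive character of $\mathbb{F}_p$.

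Third, I would invoke the Riemann Hypothesis for curves: each Frobenius eigenvalue on $H^1$ has absolute value $\sqrt{p}$. Summing $(d-1)$ eigenvalues per character-eigencomponent and taking absolute values yields exactly $|\sum_x e^{2\pi i f(x)/p}| \le (d-1)\sqrt{p}$. The main obstacle is the Riemann Hypothesis itself: it has no truly elementary proof, and I would cite Weil's theorem on Jacobians of curves as a black box. If a self-contained argument were required, the fallback would be the Stepanov-Bombieri-Schmidt elementary method, which constructs an auxiliary polynomial in $\mathbb{F}_p[x,y]$ of controlled degree vanishing to high order along the graph of Frobenius and extracts the square-root bound from the resulting degree inequality.
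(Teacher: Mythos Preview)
The paper does not prove this theorem at all: it is quoted as a classical result from number theory, with a citation to Weil's 1948 paper, and is then used as a black box in the proof of Lemma~\ref{le:co}. So there is no ``paper's own proof'' to compare your proposal against.

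That said, your sketch is a correct high-level outline of the standard modern proof via the Artin--Schreier cover and the Riemann Hypothesis for curves. One small imprecision worth noting: your claim that the Swan conductor at infinity equals $d$, and hence that $g(C)=(d-1)(p-1)/2$, implicitly assumes both $p\nmid m_d$ and $p\nmid d$. Under the stated hypothesis one only knows that \emph{some} $m_j$ is a unit mod $p$; after passing to the Artin--Schreier reduced representative of $f$ one obtains a polynomial of some degree $d'\le d$ with $p\nmid d'$, genus $(d'-1)(p-1)/2$, and the sharper bound $(d'-1)\sqrt{p}$. Since $d'\le d$, the stated inequality $(d-1)\sqrt{p}$ follows a fortiori, so this does not affect the conclusion.
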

We furthermore introduce the definition of {\em mutual incoherence}.
Let matrix $A=(a_1,\ldots,a_D)\in \C^{N\times D}$, where $N\leq D$
and $\|a_i\|_2=1$. The mutual incoherence of $A$ is defined by
$$
{\mathcal M}(A):=\max_{i\neq j}\left<a_i,a_j\right>.
$$
The low bound of ${\mathcal M}(A)$, which is also called as {\em
Welch's bound}, is given in \cite{welch}
\begin{equation}\label{eq:welch}
{\mathcal M}(A)\geq \sqrt{\frac{D-N}{(N-1)D}}.
\end{equation}
If the equality holds, we call $A$ as {\em optimal Grassmannian frames}. As shown in \cite{feichtinger},
 the equality can hold only if $D\leq N^2$ (see also \cite{mu}).
 Suppose that $N\geq 2q+1$ is a prime number.
We choose the sampling set $X=\{x_1,\ldots,x_N\}$ with
\begin{equation}\label{eq:x}
x_j= (j,j^2,\ldots,j^{d})/N \mod 1, \quad j=1,\ldots,N,
\end{equation}
 and denote by ${\mathcal F}_X$ the $N\times D$ {\em sampling matrix } with entries
$$
({\mathcal F}_{X})_{j,k} =\exp{(2\pi ik\cdot x_j)},\quad\quad\quad
j=1,\ldots,N,\,\, k\in [-q,q]^d.
$$
Also, $f(x_j)=({\mathcal F}_Xc)_j$, where $c$ is the
vector of Fourier coefficients of $f$. We let $\phi_k$ denote the
$k$-th column of ${\mathcal F}_X$. A simple observation is
$\|\phi_k\|_2=\sqrt{N}$. Set
$$\mu:={\mathcal M}({{\mathcal F}_X}/{\sqrt{N}}).$$
Then we have
\begin{lem}\label{le:co}
$$
\mu\,\,\leq\,\, {(d-1)}/{\sqrt{N}}.
$$
\end{lem}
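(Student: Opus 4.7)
The plan is to reduce the mutual incoherence bound directly to Weil's exponential sum estimate (Theorem~\ref{th:weil}). Fix two distinct indices $k, k' \in [-q,q]^d \cap \Z^d$ and set $m := k - k'$. Then $m \neq 0$ and each component satisfies $|m_j| \leq 2q$. By the definition of $\phi_k$ and the choice of sampling points (\ref{eq:x}),
$$
\langle \phi_k, \phi_{k'} \rangle \;=\; \sum_{j=1}^{N} e^{2\pi i (k-k') \cdot x_j} \;=\; \sum_{j=1}^{N} \exp\!\left( \frac{2\pi i \, (m_1 j + m_2 j^2 + \cdots + m_d j^d)}{N} \right),
$$
which is exactly an exponential sum of the form appearing in Theorem~\ref{th:weil} with prime $p = N$ and polynomial $f(x) = m_1 x + \cdots + m_d x^d$.

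Next I would verify the hypothesis of Weil's bound. Since $m \neq 0$, there exists some index $j_0$ with $m_{j_0} \neq 0$. Because $|m_{j_0}| \leq 2q < 2q+1 \leq N = p$, the nonzero integer $m_{j_0}$ cannot be divisible by the prime $p$. Thus Theorem~\ref{th:weil} applies and yields
$$
|\langle \phi_k, \phi_{k'} \rangle| \;\leq\; (d-1)\sqrt{N}.
$$
Dividing by $\|\phi_k\|_2 \|\phi_{k'}\|_2 = N$ and taking the maximum over $k \neq k'$ gives $\mu \leq (d-1)/\sqrt{N}$, as claimed.

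There is no serious obstacle here: the only subtlety is ensuring that some coefficient of the polynomial in the exponent is coprime to $p$, which is precisely why the hypotheses $N$ prime and $N \geq 2q+1$ were built into the construction. The monomial parameterization $x_j = (j, j^2, \ldots, j^d)/N$ is what makes the inner product manifest as a Weil sum; any alternative choice would lose this structure and force a different (likely much weaker) bound.
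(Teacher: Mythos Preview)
Your proof is correct and follows essentially the same approach as the paper: compute the inner product as a Weil exponential sum and apply Theorem~\ref{th:weil}. You are in fact slightly more careful than the paper, since you explicitly verify the hypothesis $p \nmid m_{j_0}$ using $|m_{j_0}| \leq 2q < N$, a point the paper leaves implicit.
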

\begin{proof}
Recall that $\phi_k$ denotes the $k$-th column of ${\mathcal F}_X$.
Note that
\begin{eqnarray*}
\left| \left<{\phi}_m,{\phi}_k\right>\right| = \left|\sum_{j=1}^N
e^{ 2\pi i p(j)/N}\right|,
\end{eqnarray*}
where $p$ is a polynomial with degree $d$ in the form of
$(m-k)\cdot (j,\ldots,j^{d})$ with $m,k\in [-q,q]^d$. Then Theorem \ref{th:weil} implies
that
$$
 \left| \left<{\phi}_m,{\phi}_k\right>\right|\,\, \leq \,\,
 (d-1)\sqrt{N}, \quad \mbox{\rm for   }\quad m\neq k.
 $$
Hence,
$$
\mu =\max_{m\neq k} \left<{\phi}_m,{\phi}_k\right>/N\,\,\leq\,\,
{(d-1)}/{\sqrt{N}}.
$$
\end{proof}
Let us consider recovery by OMP using the deterministic sampling $X$.
We first recall the following theorem:
\begin{thm}\cite{trig2}\label{th:omp} Assume $(2M-1)\mu <1$. Then OMP (and also
BP) recovers every $f\in \Pi_q^d(M)$.
\end{thm}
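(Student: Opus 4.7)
The plan is to establish this via Tropp's Exact Recovery Condition (ERC), which is the standard route for coherence-based analysis of OMP and BP. Writing $T \subset [-q,q]^d \cap \Z^d$ for the support of $f$ (so $|T|\leq M$) and letting $\Phi_T$ denote the submatrix of $\mathcal{F}_X/\sqrt{N}$ consisting of the normalized columns indexed by $T$, the ERC sufficient for both OMP and BP recovery is
\[
\max_{k\notin T}\,\bigl\|\Phi_T^\dagger \phi_k/\sqrt{N}\bigr\|_1 \;<\;1.
\]
I would therefore reduce the statement to verifying this inequality under the hypothesis $(2M-1)\mu<1$.

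The first step is to control the Gram matrix $G_T:=\Phi_T^*\Phi_T/N$. Its diagonal entries equal $1$ and its off-diagonal entries are bounded in modulus by $\mu$, so $\|G_T-I\|_{\infty\to\infty}\leq (M-1)\mu<1$ (the latter follows from $(2M-1)\mu<1$). By a Neumann series argument one concludes that $G_T$ is invertible with
\[
\bigl\|G_T^{-1}\bigr\|_{\infty\to\infty} \;\leq\; \frac{1}{1-(M-1)\mu}.
\]
The second step is to bound the cross-correlations: for any $k\notin T$, the vector $\Phi_T^*\phi_k/N$ has $\ell_\infty$ norm at most $\mu$ and $M$ entries, hence $\ell_1$ norm at most $M\mu$. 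Combining with the previous bound gives
\[
\bigl\|\Phi_T^\dagger \phi_k/\sqrt{N}\bigr\|_1 \;=\; \bigl\|G_T^{-1}\Phi_T^*\phi_k/N\bigr\|_1 \;\leq\; \frac{M\mu}{1-(M-1)\mu},
\]
which is strictly less than $1$ precisely when $(2M-1)\mu<1$. This verifies the ERC.

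The third step is to convert the ERC into actual recovery. For OMP, I would argue by induction on the iteration counter $\ell$: provided the selected indices $\Lambda^\ell$ lie in $T$, the residual $r^\ell$ lies in the column span of $\Phi_T$, and the ERC guarantees that the largest correlation $|h^\ell(j)|$ is attained inside $T$; hence $\Lambda^{\ell+1}\subset T$, and after at most $M$ steps the support is fully identified, at which point the least-squares update returns the true coefficient vector $c$ exactly. For BP the same ERC yields a dual certificate witnessing that the true sparse $c$ is the unique $\ell_1$-minimizer, but since the paper only uses the OMP conclusion in the sequel, I would restrict the writeup to OMP and merely cite the BP version.

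The main obstacle is purely a bookkeeping one: ensuring that the Neumann series bound uses the correct operator norm ($\ell_\infty\to\ell_\infty$, not spectral) so that it composes cleanly with the $\ell_1$ bound on $\Phi_T^*\phi_k/N$. Everything else is a direct application of the coherence hypothesis $(2M-1)\mu<1$, so no novel estimates beyond Lemma~\ref{le:co} are required.
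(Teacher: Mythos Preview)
Your approach is correct and is exactly the standard coherence-to-ERC argument originating with Tropp that Kunis and Rauhut invoke; note, however, that the present paper does not actually give a proof of this statement at all---it simply cites \cite{trig2}. So there is nothing to compare against in the paper itself; you are supplying the (standard) proof that the cited reference contains.

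Two small bookkeeping points are worth tightening before you write it up. First, your normalizations drift: if $\Phi_T$ is the submatrix of $\mathcal{F}_X/\sqrt{N}$ as you declare, then $G_T=\Phi_T^*\Phi_T$ already has unit diagonal and the extra division by $N$ in your definition $G_T:=\Phi_T^*\Phi_T/N$ (and later in $\Phi_T^*\phi_k/N$) is inconsistent; pick one convention and carry it through. Second, to pass from the $\ell_1$ bound on $\Phi_T^*(\phi_k/\sqrt{N})$ to an $\ell_1$ bound after multiplying by $G_T^{-1}$ you need $\|G_T^{-1}\|_{1\to 1}$, not $\|G_T^{-1}\|_{\infty\to\infty}$; these coincide because $G_T$ is Hermitian, but that fact should be stated explicitly (your ``main obstacle'' paragraph flags the issue without quite resolving it). With those fixes the argument is airtight.
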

Combining Lemma \ref{le:co} and Theorem \ref{th:omp},  we have
\begin{thm}\label{th:main1}
Let  the sampling set $X=\{x_1,\ldots,x_N\}$ with
\begin{equation*}
x_j= (j,j^2,\ldots,j^{d})/N \mod 1, \quad j=1,\ldots,N.
\end{equation*}
Suppose $N \geq \max\{2q+1,(d-1)^2 (2M-1)^2+1\}$ and $N$ is a prime number. Then OMP (and also BP)
 recovers every $M$-sparse trigonometric polynomial exactly from the determinant sampling  $X$.
\end{thm}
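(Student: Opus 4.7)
The plan is to obtain Theorem \ref{th:main1} as an immediate consequence of Lemma \ref{le:co} combined with Theorem \ref{th:omp}; essentially all the real work has already been done in proving Lemma \ref{le:co} (which is where Weil's bound, Theorem \ref{th:weil}, was invoked). The only task left is to verify that the quantitative hypotheses on $N$ stated in the theorem translate into the coherence hypothesis $(2M-1)\mu<1$ required by Theorem \ref{th:omp}.

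First, I would check that the hypotheses of Lemma \ref{le:co} are genuinely met. The lemma needs $N$ to be a prime number with $N\ge 2q+1$, both of which are assumed. The reason $N\ge 2q+1$ matters is that, for any two distinct frequency vectors $m,k\in [-q,q]^d$, the components of $m-k$ lie in $[-2q,2q]$, hence are nonzero residues mod $N$ whenever some component is nonzero; this is precisely the non-divisibility hypothesis of Theorem \ref{th:weil} needed to control the exponential sum $\sum_{j=1}^N e^{2\pi i (m-k)\cdot(j,\ldots,j^d)/N}$. With this in place, Lemma \ref{le:co} delivers the bound
\[
\mu \;\le\; \frac{d-1}{\sqrt{N}}.
\]

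Next I would use the second hypothesis, $N\ge (d-1)^2(2M-1)^2+1$. This rearranges to $\sqrt{N}>(d-1)(2M-1)$, which gives
\[
(2M-1)\mu \;\le\; \frac{(2M-1)(d-1)}{\sqrt{N}} \;<\; 1.
\]
Thus the coherence hypothesis of Theorem \ref{th:omp} holds, and that theorem asserts exact recovery of every $f\in \Pi_q^d(M)$ by OMP (and by BP).

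Because both main ingredients are already proved above, I do not anticipate any serious obstacle; the only mild care needed is in bookkeeping the two separate lower bounds on $N$ (the number-theoretic condition $N\ge 2q+1$ with $N$ prime, used by Weil, and the analytic condition $N\ge (d-1)^2(2M-1)^2+1$, used by the coherence criterion for OMP) and in checking that the strict inequality $(2M-1)\mu<1$ really follows from the "$+1$" offset in the hypothesis on $N$. The statement of the theorem has been set up so that both are immediate, so the proof collapses to a one-line citation chain.
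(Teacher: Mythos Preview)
Your proposal is correct and matches the paper's own argument exactly: the theorem is stated in the paper as an immediate corollary of Lemma~\ref{le:co} and Theorem~\ref{th:omp}, with the hypothesis $N\ge (d-1)^2(2M-1)^2+1$ ensuring $(2M-1)\mu<1$ and the hypothesis $N\ge 2q+1$ with $N$ prime ensuring Lemma~\ref{le:co} applies. There is nothing to add.
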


\subsection{Special sparsity patterns}
For the deterministic sampling $X$, in Theorem \ref{th:main1}, an additional assumption is that $N\geq 2q+1=D^{1/d}$. Hence, only when  $q$ is fixed and $d$ is a variable,
we can say that $\max\{2q+1,(M(d-1))^2\}=O(M\log D)^2$.
However, if a prior information about the support of $f$ is known, the restriction might be reduced. For $\Gamma\subset [-q,q]^d\cap \Z^d$, recall that we use $\Pi_\Gamma$  to  denote the space of all trigonometric polynomials in dimension $d$ whose coefficients
 are supported on $\Gamma$.
 We let $\beta_\Gamma$ be the minimum constant so that, for any $k_1,k_2\in \Gamma$ with $k_1\neq k_2$, there exists a non-zero entry of the vector $k_1-k_2$,
 say $k_{1,j}-k_{2,j}$, such that $0<|k_{1,j}-k_{2,j}|\leq \beta_\Gamma$. Then, for $f\in \Pi_\Gamma$, we can replace the condition $N\geq 2q+1$ in Theorem \ref{th:main1}
 by $N\geq \beta_\Gamma+1$.  For example, we suppose that $\Gamma_0$ is a `curve' in $[-q,q]^d$, which is defined by
$$
\Gamma_0:=\left\{\left(m,\left\lfloor\frac{m}{(2q+1)^{1/d}}\right\rfloor,\ldots,\left\lfloor\frac{m}{(2q+1)^{(d-1)/d}}\right\rfloor\right): -q\leq m\leq q\right\}.
$$
Then a simple argument shows that $\beta_{\Gamma_0}\leq (2q+1)^{1/d}$. Hence, OMP can recover $M$-sparse trigonometric polynomials $f\in \Pi_{\Gamma_0}$ exactly from
 determinant sampling $X$ with $N \geq \max\{(2q+1)^{1/d},(d-1)^2 (2M-1)^2+1\}$ and $N$ being a prime number. In particular, if $d\geq \log_2(2q+1)$, the condition
  $N\geq (2q+1)^{1/d}$ is reduced to $N\geq 2$.


\subsection{ Related work } We are not the only ones seeking the deterministic Fourier
sampling. We would especially like to note works by Iwen
\cite{iwen1,iwen2,iwen3} and Bourgain et al \cite{erip}. In \cite{iwen1}, a deterministic Fourier sampling method is considered and a sublinear-time algorithm which recovers  one
dimensional sparse trigonometric polynomials $f$ is presented. However, the sampling in
\cite{iwen1} need a combinatorial structure, which seems  non-trivial to be constructed. Moreover, the coherence of the sampling matrix in
\cite{iwen1} is not small. In recent work
 \cite{iwen2,iwen3}, Iwen presents a binary matrix, say ${\mathcal B}$,
with small coherence. By computing the product of ${\mathcal B}$ and the
discrete Fourier matrix $\Psi$, one can obtain a deterministic  sampling matrix
${\mathcal B}\cdot \Psi$. However, the construction
 of ${\mathcal B}$ requires $O(M\log D)$ large primes and one also needs a fast algorithm for computing the product of ${\mathcal B}$ and $\Psi$.

Our work is in a different direction.  The main concern of our methods is to
present a determinist sampling with an {\em analytic form} so that the
coherence of the sampling matrix is as small as possible and hence the popular
decoder algorithms, such as OMP and BP, can work well for the sampling. In
fact, to construct our deterministic sampling, we only require a prime number
$N$. The coherence of our deterministic sampling matrix almost meets the Welch's
bound. Hence, we also present a nearly optimal harmonic Grassmannian frames,
which may be of independent interest \cite{mu}. We also point out another
difference between our study and that of Iwen. The deterministic sampling given
in \cite{iwen1,iwen2,iwen3} is designed for trigonometric polynomials in a
single variable.   In \cite{iwen3}, the author shows that one can deal with
high dimensional trigonometric polynomials by a dimensionality reduction
technique, but it requires the integer solutions of a linear equation and hence
it is an indirectly method.  In contrast, our sampling is much more convenient
for dealing with high dimensional trigonometric polynomials. The last, but not
 least,  it seems that in practice the algorithms given in \cite{iwen1,iwen3} require
much more samples than OMP and BP when the dimension $d$ is large.

We next compare our study and that of Bourgain et al. In \cite{erip}, for the case with $d=1$, Bourgain et al show a connection between the deterministic Fourier sampling and Tur\'{a}n's problem, and also present many possible approaches to constructing the sampling. Comparing with the approaches given in \cite{erip}, our sampling has the advantage of simplicity. Another important distinction is that our sampling strategy can be interpretable as a sampling strategy for recovering higher dimensional sparse trigonometric polynomials.

Finally, we would like to point out connections with chirp sensing codes.
In \cite{chirp}, a $N\times N^2$ measurement matrix $\Theta$ is designed with chirp sequences forming the columns, i.e.,
$$
(\Theta)_{\ell,k}=\exp(2\pi i r \ell ^2/N) \exp(2\pi i m \ell/N) \, \text{ with } 1\leq \ell \leq N \text{ and } k=Nr+m, 1\leq k \leq N^2.
$$
If we take $d=2$, then ${\mathcal F}_{X}$ is reduced to the chirp sensing matrix $\Theta$. Hence, the measurement matrix $\Theta$ can be considered
 as a special case of our determinant sampling matrix ${\mathcal F}_X$. Noting that the coherence of the matrix ${\mathcal M}(\mathcal F_X/\sqrt{N})$ is not more than $(d-1)/\sqrt{N}$, by an argument   similar to one employed by DeVore in \cite{devore}, we can show that $\mathcal F_X/\sqrt{N}$ satisfies RIP of order $k$ with constant $\delta=(k-1)(d-1)/\sqrt{N}$   (the definition of RIP is given in the next section). For fixed $0<\delta <1$, we can obtain the up bound $k=O(\sqrt{N}/d)$. Using the method of additive combinatorics, in \cite{erip}, Bourgain et al investigate the RIP property of $\Theta$ and show that one can construct  $N\times D$ RIP matrixes of order
    $k$ with $N=o(k^2)$, which implies that a better up bound of $k$. It will be a challenging  problem to extend the result  in \cite{erip} to the matrix ${\mathcal F}_X$.
\section{RIP and StRIP}

Given a $N\times D$ matrix $\Phi$ and any set $T$ of column indices,
we denote by $\Phi_T$ the $n\times |T|$ matrix composed of these
columns. Similarly, for a vector $y\in \C^D$, we use $y_T$ to denote
the vector formed by the entries of $y$ with indices from $T$.
Following Cand\`{e}s and Tao, we say that $\Phi$ satisfies the
Restricted Isometry Property (RIP) of order $k$ and constant $\delta
\in (0,1)$ if
\begin{equation}\label{eq:con}
(1-\delta) \|y_T\|^2 \leq \|\Phi_Ty_T\|^2 \leq (1+\delta) \|y_T\|^2
\end{equation}
holds for all sets $T$ with $|T| \leq k$.
 In fact, (\ref{eq:con}) is equivalent to requiring that the
 Grammian matrices $\Phi_T^\top\Phi_T$ has all of its eigenvalues in $[1-\delta,
 1+\delta]$ for all $T$ with $|T|\leq k$.
 A fundamental question in compressed sensing is the construction of
suitable RIP matrix $\Phi$.
In general,  RIP matrices can be  constructed using random variables such as Gaussian or Bernoulli as  their entries.
However, the construction of the deterministic RIP matrices is a challenging task (see also \cite{devore,erip}).
Given that there are no satisfying deterministic construction of RIP matrices, several authors suggest  an alternative statistical version of  RIP \cite{quad,strip2}:
\begin{defn}  We say that a deterministic matrix
$\Phi$ satisfies statistical restricted isometry property (StRIP) of order $k$ with constant $\delta$ and
probability $1-\epsilon$ if
$$
{\rm Pr}(\left| \|\Phi y\|^2-\|y\|^2\right|\leq \delta\|y\|^2)
\geq 1-\epsilon,
$$
with respect to a uniform distribution of the vectors $y$ among all $k$-sparse
vectors in $\R^D$ of the same norm.
\end{defn}
In \cite{strip1}, the authors derive the StRIP performance bound in
terms of the mutual coherence of the sampling matrix and the
sparsity level of input signal, as summarized by the following
theorem:
\begin{thm} \cite{strip1}\label{th:th32}
Let $\Phi=(\phi_1,\ldots,\phi_D)$ be an $N\times D$ deterministic
 matrix, where each column has unit norm. Assume further
that all row sums of $\Phi$ are equal to zero, and ${\mathcal M}(\Phi)\leq
\alpha_1/\sqrt{N}$, where  $\alpha_1$ is some constant.  Then $\Phi$ is a StRIP of order
$k\leq c_0(\delta)N/\log D$ with constant $\delta$ and probability
$1-1/D$, where $c_0(\delta)=\delta^2/(8\alpha_1^2)$.
\end{thm}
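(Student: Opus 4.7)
The plan is to establish the StRIP bound by a bounded-differences (McDiarmid) concentration argument applied to the random sign pattern of the sparse vector $y$. Write $G := \Phi^{*}\Phi$ for the Gram matrix; the unit-norm column assumption gives $G_{ii}=1$ and the coherence hypothesis gives $|G_{ij}| \le \alpha_1/\sqrt{N}$ for $i\ne j$. I would model a uniformly random $k$-sparse unit vector $y$ by drawing a uniform support $T\subset[D]$ of size $k$ together with independent Rademacher signs $\xi_i\in\{\pm 1\}$ for $i\in T$, and setting $y_i = \xi_i/\sqrt{k}$ on $T$. A direct expansion using $G_{ii}=1$ gives
$$\mathbb{E}_{\xi}[\|\Phi y\|_2^2] = \frac{1}{k}\sum_{i\in T} G_{ii} = 1 = \|y\|_2^2,$$
so only the deviation from the mean has to be controlled. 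The zero row-sum hypothesis $\sum_i \phi_i = 0$ enters when one instead randomises over the support alone: it forces $\sum_{i,j}G_{ij}=0$, so that $\mathbb{E}_T[\|\Phi y\|_2^2]$ still matches $\|y\|_2^2$ up to lower-order terms.

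Next, conditioning on $T$, view $f(\xi) := \|\Phi y\|_2^2 = k^{-1}\|\sum_{i\in T}\xi_i \phi_i\|_2^2$ as a function of the independent Rademachers. The core step is the bounded-difference calculation: flipping a single sign $\xi_\ell$ yields, after expanding the Hermitian form and cancelling the diagonal contribution via $G_{\ell\ell}=1$,
$$f(\xi^{(\ell)}) - f(\xi) = -\frac{4\xi_\ell}{k}\,\mathrm{Re}\sum_{j\in T\setminus\{\ell\}}\xi_j\,G_{\ell j},$$
which in absolute value is bounded by $4(k-1)\alpha_1/(k\sqrt{N}) \le 4\alpha_1/\sqrt{N}$ via the coherence estimate. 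Hence the sum of squared bounded differences is at most $16 k\alpha_1^2/N$, and McDiarmid's inequality gives
$$\Pr\bigl(|\|\Phi y\|_2^2-1| > \delta \mid T\bigr) \;\le\; 2\exp\Bigl(-\frac{N\delta^2}{8 k \alpha_1^2}\Bigr).$$
Since this bound is uniform in $T$, it holds unconditionally after averaging over the support.

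Finally, requiring the right-hand side to be at most $1/D$ forces $N\delta^2/(8k\alpha_1^2) \ge \log(2D)$, which on absorbing the $\log 2$ factor into the constant is equivalent to $k \le c_0(\delta)\,N/\log D$ with $c_0(\delta) = \delta^2/(8\alpha_1^2)$, exactly the stated bound. The step I anticipate being most delicate is the bounded-difference computation: one must carefully expand $\|\sum_i \xi_i \phi_i\|_2^2$ in Hermitian form, use $G_{\ell\ell}=1$ to cancel the rank-one diagonal contribution produced by the sign flip, and only then invoke the coherence hypothesis to control the surviving off-diagonal sum. A secondary technical point is matching the random-support/random-sign surrogate above to the exact ``uniform distribution on $k$-sparse vectors of the same norm'' in the StRIP definition; for a rotationally symmetric variant (for example, $y$ uniform on the sphere in $\mathbb{R}^k$ restricted to a random support) one would replace McDiarmid by a Lipschitz/Talagrand-type spherical concentration estimate, with the same final bound up to universal constants.
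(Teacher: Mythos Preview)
This theorem is not proved in the paper at all: it is quoted from the external reference \cite{strip1} and then invoked as a black box in the proof of Theorem~\ref{th:33}. There is therefore no in-paper argument against which to compare your proposal.

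On its own merits your McDiarmid bounded-differences argument is sound and does recover the stated constant $c_0(\delta)=\delta^2/(8\alpha_1^2)$. Two points are worth noting. First, your proof never actually uses the zero row-sum hypothesis: once you condition on the support $T$, the identity $\mathbb{E}_\xi\|\Phi y\|_2^2=1$ follows solely from $G_{ii}=1$, and the resulting tail bound is already uniform in $T$; so either that hypothesis is redundant in the Rademacher model or the source \cite{strip1} (whose title advertises Stein's method rather than bounded differences) exploits it in a different way---you should not present your argument as explaining its role. Second, as you yourself flag, the equal-magnitude Rademacher surrogate is not literally the ``uniform distribution over $k$-sparse vectors of a given norm'' appearing in the StRIP definition; replacing McDiarmid by a spherical concentration inequality preserves the order $N/\log D$ but may alter the explicit constant $1/8$.
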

As stated in \cite{quad,strip1,strip2}, there are a large class of
deterministic matrices which satisfy the StRIP. However, the
restriction on the size ($N$ and $D$) is heavy. For example, FZC
Codes \cite{fzc} require that $D=N^2$ and $N$ is a prime number;
Gold/Gold-like Codes \cite{gold} require that $D=(N+1)^2$ and $N$ is
in the form of $2^p-1, p\in \Z$, and so on. We next construct a
deterministic  StRIP  matrix with the large  range size
 based on the method introduced in Section 2.

Suppose $N\in \N$ is a prime number. We suppose that $D\in \N$
can be written in the form of  $D=p_1\cdots p_d$, where $N=p_1\geq
p_2\geq \cdots \geq p_d\geq 2$ and $p_1,\ldots, p_d$ are prime numbers.
For $t=1,\ldots,d$, we set
$$
I_t:=
\begin{cases}
[-(p_t-1)/2,(p_t-1)/2] ,\,\,\, p_t\neq 2,\\
[0,1], \,\,\, p_t=2.
\end{cases}
$$
Recall that
$$
{x}_j= (j,j^2,\ldots,j^{d})/N \mod 1, j=1,\ldots, N,
$$
and denote by $\tilde{\mathcal F}_X$ the $N\times D$ matrix with
entries
$$
(\tilde{\mathcal F}_{X})_{j,k} =\exp{(2\pi ik\cdot
{x}_j)},\quad j=1,\ldots,N,\,\, k\in I_1\times I_2 \times
\cdots \times I_d.
$$
\begin{thm}\label{th:33}
Suppose that $0<\delta<1$ is given. The matrix $\tilde\Phi:=\tilde{\mathcal F}_{X}/\sqrt{N}$ is a StRIP of order $\delta^2
\frac{N}{8(\log D)}\left(\frac{\log (D/N)}{\log N}\right)^2$ with constant $\delta$ and
probability $1-1/D$.
\end{thm}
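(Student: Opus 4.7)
The plan is to apply Theorem \ref{th:th32} to the matrix $\tilde\Phi$, which requires verifying three conditions: unit-norm columns, zero row sums, and a mutual coherence bound of the form $\alpha_1/\sqrt N$. The coherence estimate will reuse the Weil-based argument from Lemma \ref{le:co}, and the remaining arithmetic step will translate the resulting StRIP order into logarithmic quantities in $N$ and $D$.

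Unit norm is immediate: each column of $\tilde{\mathcal F}_X$ consists of $N$ unit-modulus exponentials, so $\|\phi_k\|_2 = \sqrt{N}$. For the coherence, I would repeat the proof of Lemma \ref{le:co} almost verbatim. For distinct columns indexed by $m, k \in I_1\times\cdots\times I_d$, $\langle\phi_m,\phi_k\rangle = \sum_{j=1}^N e^{2\pi i f(j)/N}$, where $f(j) = \sum_{t=1}^d (m_t - k_t) j^t$ has degree at most $d$. Since $0 < |m_t - k_t| < p_t \le N$ wherever $m_t \ne k_t$ and $N$ is prime, some coefficient of $f$ is coprime to $N$; Theorem \ref{th:weil} then gives $|\langle\phi_m,\phi_k\rangle| \le (d-1)\sqrt{N}$, so $\mathcal M(\tilde\Phi) \le (d-1)/\sqrt{N}$ and we can take $\alpha_1 = d - 1$.

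For the row-sum condition, I would exploit the product structure of the exponentials. The $j$-th row sum factors as $\prod_{t=1}^d \sum_{k_t \in I_t} e^{2\pi i k_t j^t/N}$. Because $I_1$ is a complete residue system modulo the prime $p_1 = N$, the $t = 1$ factor vanishes whenever $N \nmid j$, so every row sum is zero except possibly for the single index $j = N$ (where $x_j \equiv 0$); this lone exception can be absorbed by a minor relabeling or a one-row trim of the sampling set, which does not affect the asymptotic order.

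With all three hypotheses in hand, Theorem \ref{th:th32} yields StRIP of order $\delta^2 N / (8 (d-1)^2 \log D)$ with probability $1 - 1/D$. The final step is to bound $d - 1$ in terms of $N$ and $D$ using the factorization $D = p_1 \cdots p_d$ with $p_t \le N$, which gives $d \ge \log D / \log N$ and hence $d - 1 \ge \log(D/N)/\log N$. The main obstacle lies in this translation step, since matching the precise form $(\log(D/N)/\log N)^2$ in the statement requires tracking the factorization carefully and possibly invoking a refined form of Weil's estimate that exploits the effective degree of the exponential sum for each pair of columns; the earlier steps, by contrast, I expect to be routine.
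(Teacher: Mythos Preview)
Your plan is exactly the paper's: verify the hypotheses of Theorem~\ref{th:th32} (unit-norm columns, coherence $\le (d-1)/\sqrt N$ via the Weil bound as in Lemma~\ref{le:co}, and vanishing row sums) and then convert $d-1$ into logarithms of $N$ and $D$. Two small divergences are worth noting. First, the paper disposes of your ``main obstacle'' in one line: it simply records the inequality $(d-1)^2 \ge (\log N/\log(D/N))^2$ (the reciprocal of the ratio you derived) and substitutes directly, with no appeal to any refined Weil estimate --- so that suggested refinement is not part of the argument. Second, your row-sum analysis is actually more careful than the paper's: the paper asserts without comment that all row sums of $\tilde{\mathcal F}_X$ vanish, whereas you correctly isolate the exceptional index $j=N$, where $x_N\equiv 0$ and the row sum equals $D$; the paper does not address this point.
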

\begin{proof} We denote the  mutual coherence of $\tilde{\Phi}$ by $\tilde\mu$. Using the same method with the proof of Lemma \ref{le:co},
 we obtain that $\tilde\mu \leq (d-1)/\sqrt{N}$. Note that the all row sums of $\tilde{\mathcal F}_{X}$  is $0$, and also
  that $(d-1)^2 \geq \left(\frac{\log N}{\log (D/N)}\right)^2$. The conclusion can follow from Theorem \ref{th:th32} directly.
\end{proof}
\begin{remark} When $D\gg N$, the Welch's bound (\ref{eq:welch}) approximately equals to $1/\sqrt{N}$. Note that
$$
{\mathcal M}(\tilde{\Phi})\leq (d-1)/\sqrt{N} \leq \log_2D/\sqrt{N}.
$$
The mutual incoherence of $\tilde\Phi$ almost meets the Welch's bound, which implies that $\tilde\Phi$ is a nearly optimal harmonic Grassmannian frame (see \cite{mu}).
\end{remark}
\section{Numerical Experiments}

The purpose of the experiment is the comparison for the random
sampling and the determinant sampling.  Given  the degree of
trigonometric polynomials $q\in \N$ and the number of variables
$d\in \N$, the support set $T$ is drawn from the uniform
distribution over the set of all subsets of $[-q,q]^d$ of size $M$.
The non-zero Fourier coefficients, the real as well as the imaginary
part of $c_j, j\in T$, have the Gaussian distribution with mean zero
and standard deviation one. For the determinant sampling, we use the
method introduced in Section 2 to produce the sampling  points $x_j,j=1,\ldots,N$.
Similar with \cite{trig2},  we choose the random sampling points based on the continuous probability model.

\begin{example}
We take $q=2, d=5, D=3125$ and draw a set $T$ of size $M\in
\{1,2,\ldots,40\}$. The $N=83$ deterministic sampling points are obtained by the method introduced in Section 2, and the
random sampling points are produced by the continuous probability model.  We reconstruct the Fourier
coefficients by OMP  and BP, respectively. In \cite{trig2}, the complexity of OMP is analysised
 in detail. For BP, we use the optimization tools of CVX \cite{cvx}.   We repeat the experiment 100 times for each number $M$ and calculate the success rate.
 Figure 1  and Figure 2 depict the numerical results with the reconstructing algorithm OMP and BP, respectively.  From these, one can observe that
 the performance of deterministic sampling is very similar to those of the random sampling. The numerical experiments imply the promising application of the deterministic sampling.

\begin{figure}[h]
\begin{minipage}[t]{0.51\linewidth}
\centering {
\includegraphics[width=\textwidth]{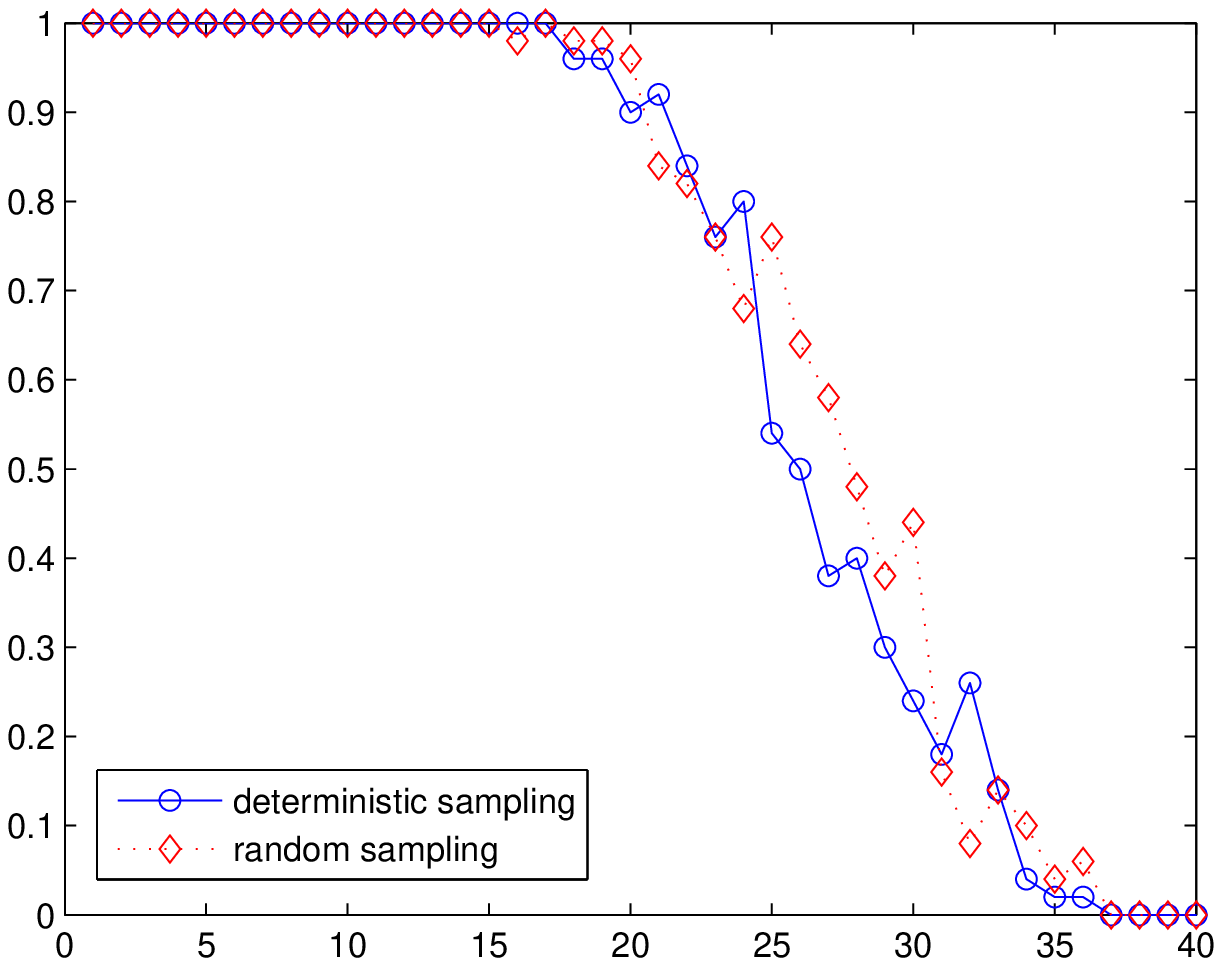}
\caption{ Simulation results using \newline the OMP } }
\end{minipage}
\hfill
\begin{minipage}[t]{0.51\linewidth}
\centering
\includegraphics[width=\textwidth]{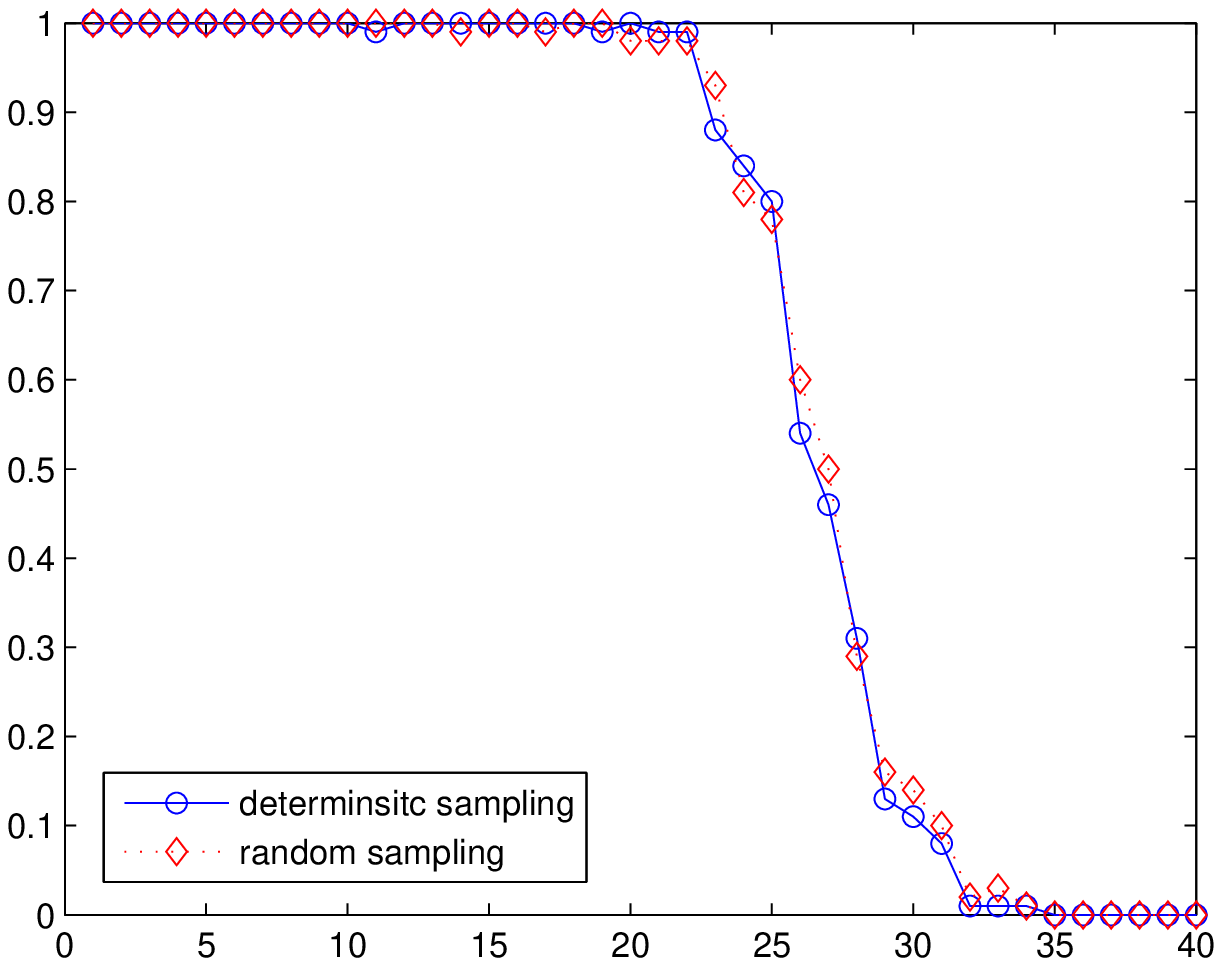}
\caption{Simulation results using\newline the BP\label{your label}}
\end{minipage}
\end{figure}

\end{example}

\begin{example}
As said before, RIP of order $k$ with constant $\delta$ is equivalent to requiring that the
 Grammian matrices $\Phi_T^\top\Phi_T$ has all of its eigenvalues in $[1-\delta,  1+\delta]$ for all $T$ with $|T|\leq k $.
So, the purpose of the second experiment is the comparison for the maximum and minimum eigenvalue statistics of
 Gram matrices $\Phi_T^\top\Phi_T$ of varying sparsity $M:=|T|$ for deterministic sampling matrix and random sampling matrix.
  In order to do this, for every value $M$, sets $T$ are drawn uniformly random over all sets and the statistics are accumulated
  from 50,000 samples. Figure 3 shows the sample means of the maximum and minimum eigenvalues of $\Phi_T^\top\Phi_T$ for  $M\in \{1,\ldots,20\}$.
   \begin{figure}
\begin{center}
\epsfxsize=6cm\epsfbox{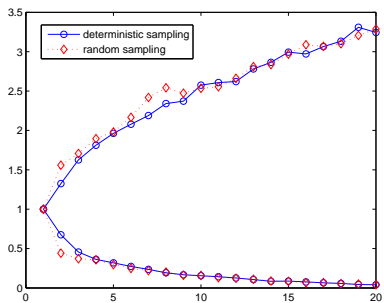} \caption{Eigenvalue statistics of
Gram matrices $\Phi_T^\top\Phi_T$ for deterministic sampling matrixes
and random sampling matrixes }
\end{center}
\end{figure}
\end{example}

\bigskip
{\bf Acknowledgments.} We would like to thank the referees for valuable comments on this paper.

\bigskip

\bigskip \medskip

\noindent {\bf Authors' addresses:}

\medskip

\noindent Zhiqiang Xu,
LSEC, Inst.~Comp.~Math., Academy of Mathematics and Systems Science,
 Chinese Academy of Sciences, Beijing, 100091, China,
  {\tt Email: xuzq@lsec.cc.ac.cn}

\end{document}